\let\csname equation*\endcsname\relax
\let\csname endequation*\endcsname\relax
\newtheorem{proposition}{Proposition}
\newtheorem{theorem}{Theorem}
\def\R{\mathbb{R}}
\def\D{\mathrm{D}}
\begin{document}



\title{Birational maps from polarization and the preservation of measure and integrals}
\author{Robert I McLachlan$^1$\footnote{Author to whom any correspondence should be addressed.}, David I McLaren$^2$,  and G R W Quispel$^2$}
\address{%
    $^1$School of Mathematical and Computational Sciences, Massey University, Palmerston North 4472, New Zealand\\%
    $^2$Department of Mathematical and Physical Sciences, La Trobe University, Melbourne, Victoria 3086, Australia\\%
    }

\begin{abstract}
\noindent
The main result of this paper is the discretization of second-order Hamiltonian systems of the form
$\ddot x = -K \nabla W(x)$, where $K$ is a constant symmetric matrix and $W\colon\R^n\to \R$ is a polynomial of
degree $d\le 4$ in any number of variables $n$. The discretization uses the method of polarization and preserves both the energy and the invariant measure of the differential equation, as well as the dimension of the phase space. This generalises earlier work for discretizations of first order systems with $d=3$, and of second order systems with $d=4$ and $n=1$.
\end{abstract}

\section{Introduction}
\subsection{Preamble}

To any physicist, invariance and covariance are important properties. The significance of symmetries, integrals, symplectic structure, measure preservation, etc., can hardly be overstated. For example, the above all play a role in Hamilton's equations. A fact of life is that, in most instances, Hamilton's equations cannot be solved in closed form, and one must rely on numerical computation.

A problem then arises. Most traditional numerical integration methods do not preserve symmetries, integrals, symplectic structure, etc. This has led to the development of a new area of computational physics, entitled geometric numerical integration, in which methods are developed to preserve each of the aforementioned properties (and more). For surveys of this area see \cite{mcqu,qumc}.

What has turned out to be more difficult is to preserve several properties at the same time. In particular, preserving the Hamiltonian and phase space volume simultaneously has proved to be hard in general. This situation improved when it was discovered that Kahan's method does preserve an integral as well as a volume form (albeit that it exactly preserves a nearby integral and a nearby measure) \cite{ka2,sa,ce6,pe9a}. This has led to a flurry of research extending and generalizing these results
\cite{ce3,ce5,ce6,ce7,pe7,pe9a,pe9,pe14}. The present paper follows in this tradition.

\subsection{Darboux polynomials and birational integrators}
The Cremona group of birational maps of a vector space---rational maps with a rational inverse---is studied both  for its properties as an infinite  group (for example, its generators and subgroups) and in terms of the dynamics of its elements \cite{ca,cr,fa}. (The H\'enon map $(x,y)\mapsto (y+x^2+c, ax)$ is an example.)
Like the Cremona group, Darboux polynomials were also introduced in the nineteenth century, as a tool for the analysis of ordinary differential equations \cite{da}. Yet only in recent years has it become apparent that there is a connection between these two objects. 
Gasull and Ma\~nosa \cite{ga} constructed first integrals for some planar maps using Darboux polynomials. Celledoni et al.  \cite{ce2,ce1} gave an effective method for the construction of Darboux polynomials of birational maps, which in turn gave a method for the detection and determination of first integrals and invariant measures of birational maps.

In a parallel development, interesting new examples of birational maps have been discovered based on specific discretizations of ordinary differential equations \cite{
ce3,ce5,ce6,ce7,es,gu1,gu2,hi1,ho2,ia,ka2,ka1,ki,ko,pe7,pe9,pe9a,pe14,qu1,sa,va,wa,za3}. The Kahan (or Kahan--Hirota--Kimura) \cite{ka2,ka1} map is an example.
In some cases these maps have been found to preserve first integrals, invariant measures, and/or integrability, which is a decidedly exceptional feature of a numerical discretization. Typically, the preserved structures are perturbations in the time step of the original ones.

An example is the Kahan map applied to a canonical Hamiltonian system with a cubic Hamiltonian \cite{ce6}. Regardless of the dimension, the Kahan map has an invariant measure and first integral, which are perturbations of those of the continuous system. So in dimension 2, it generates integrable maps; in dimension 4 and greater, it  provides examples of
maps with nonlinear integrals and conserved measures unrelated (in general) to integrability
or to obvious symmetries,  a novel feature.

Darboux polynomials provide a route to discover new classes of such maps. In this paper we study maps
associated with Hamiltonian systems of the form
\begin{eqnarray}
\label{eq:ode}
 \ddot x = -K \nabla W(x),\quad x\in\mathbb{R}^{n},
 \end{eqnarray}
 where $K$ is a constant symmetric matrix and $W\colon\R^{n}\to\R$ is a polynomial of degree $\le 4$. These are canonical Hamiltonian systems with Hamiltonian
\begin{eqnarray}
\label{eq:ham} H(x,p)=\frac{1}{2}p^\top K p + W(x),
\end{eqnarray}
 written in second-order form. The maps themselves take the form
 \begin{eqnarray}
 \label{eq:m1} \frac{x_{k+1}-2x_k+x_{k-1}}{h^2} = f(x_{k-1},x_k,x_{k+1})
 \end{eqnarray}
 where $f$ is linear in each argument, thus forming a family of birational maps parametrised by $h$ with the same phase space as the associated continuous system. The function $f$ is constructed from $W$ by ``polarization'', a standard technique in algebraic geometry that has been used to construct birational mappings from differential equations \cite{ce5} and which provides a new viewpoint on the Kahan mapping. Its use  in the form (\ref{eq:m1}) was introduced by Hone and Quispel \cite{ho2}.  
 
 Our method is the following:
 \begin{enumerate}
 \item Find Darboux polynomials for specific instances or specific parametric families of maps (\ref{eq:m1}).
 \item For cases such that the Darboux polynomials provide invariant integrals or measures of the maps, conjecture the general form of these integrals and measures.
 \item Prove the invariance of these integrals and measures directly.
 \end{enumerate}
 
 Sections 2 and 3 review Darboux polynomials, the Kahan mapping, and polarization. In Section 4 we establish the existence of an invariant measure for the polar map on $\mathbb{R}^{2n}$ associated with (\ref{eq:ode}) when the potential $W(x)$ is a homogeneous quartic in $n$ variables, while Section 5 gives a first integral for this case.
 In Section 6 these results are extended to the nonhomogeneous case and (for the invariant measure only) to non-conservative vector fields. Section 7 extends the results to polar maps associated with higher-order systems
 $x^{(m)}=f(x)$, generalizing in addition a  known result for the Kahan mapping in the case $m=1$. Two results in Section 8 point towards possible further generalizations: a polar map with a two  integrals, and a polar map associated with a 4-parameter family of 4th order systems that has an integral.

It is not known which birational maps have invariants like the ones we find here, or how the specific functional form of the integral is related to the construction of the map. (We have no analogue of Noether's theorem, for example.) We hope that our results here will lead in the direction of greater understanding of these questions.
 
 \section{Darboux polynomials for differential equations and maps}

A {\em continuous Darboux polynomial} for an ODE  $\dot x = f(x)$, $x\in\R^n$, is a polynomial $P\colon\R^n\to \R$ such that there exists a polynomial $C\colon\R^n\to\R$, called the {\em cofactor of $P$}, such that $\dot P = C P$ where $\dot P := (\nabla P)^\top f$. The level set $P^{-1}(0)$ is
then invariant. Darboux polynomials have also been called `second integrals' and `weak integrals' of $f$. If $P_1$ and $P_2$ are Darboux polynomials with the same cofactor $C$, then $P_1/P_2$ is a first integral of $f$. 

In addition, if $\dot P_i = C_i P_i$, then $\frac{d}{dt}(P_1P_2) = (C_1+C_2)P_1P_2$, i.e., the product $P_1P_2$ of two Darboux polynomials is a Darboux polynomial with cofactor equal to the {\em sum} $C_1+C_2$. Moreover, if $P$ is a Darboux polynomial for the polynomial vector field $f$, then all irreducible factors of $P$ are Darboux polynomials \cite{go}.

However, even if $P_1$, $P_2$, and $C$ are known to be polynomials of a given degree, determining their coefficients such that $\dot P_1 = CP_1$ and $\dot P_2 = C P_2$ is a nonlinear problem. This has tended to limit the discovery of  Darboux polynomials to cases where either $P$ or $C$ is particularly simple (e.g. linear), or where an invariant set which may be identified as $P^{-1}(0)$ can be found by other means.

A {\em discrete  Darboux polynomial} for a map $x\mapsto x' :=\phi(x)$, $x\in\R^n$, is a polynomial $P\colon\R^n\to \R$ such that there exists a rational function $C\colon\R^n\to\R$ (again called the cofactor of $P$) such that $P' = C P$ where $P':=P\circ\phi$. As in the continuous case, the level set $P^{-1}(0)$ is then invariant, and if $P_1$ and $P_2$ are Darboux polynomials with the same cofactor $C$, then $P_1/P_2$ is a first integral of $\phi$. 

The key difference between the discrete and the continuous cases is that in the discrete case if $P_i'=C_i P_i$ then $P_1'P_2' = (C_1 C_2) (P_1P_2)$, i.e., the product $P_1P_2$ of two Darboux polynomials is a Darboux polynomial with cofactor equal to the {\em product} $C_1C_2$.  

The maps $\phi$ in which we are interested are birational and not polynomial \cite{pa,vi}. In this case, an excellent ansatz is available for cofactors of birational maps, namely, that their factors are factors of the Jacobian determinant of the map. Therefore, the unique factorization of the Jacobian determinant of the map yields a finite number of choices of cofactor up to some chosen degree, which is not possible in the continuous case. For each such choice, the determination of the Darboux polynomials associated with that cofactor is a linear problem.

In addition, we have:
\begin{enumerate}
\item If $P$ is a Darboux polynomial with cofactor $1/\det \D\phi$, then $\frac{1}{P}dx_1\wedge\dots\wedge dx_n$ is an invariant measure of $\phi$.
\item If $P_1,\dots,P_k$ are Darboux polynomials with cofactors $C_1,\dots,C_k$, respectively, 
and $\prod_{i=1}^k C_i^{\alpha_i}=\det \D\phi$, then $\big(\prod_{i=1}^k P_i^{\alpha_i}\big)dx_1\wedge\dots\wedge dx_n$ is an invariant measure of $\phi$.
\item If $P_1,\dots,P_k$ are Darboux polynomials with cofactors $C_1,\dots, C_k$, respectively, and
$\prod_{i=1}^k C_i^{\alpha_i}=1$, then $\prod_{i=1}^k P_i^{\alpha_i}$ is a first integral of $\phi$.
\item If $P_1,\dots,P_k$ are Darboux polynomials with the same cofactor $C$, then $a_1 P_1+\dots+a_k P_k$ is a Darboux polynomial with cofactor $C$ for any constants $a_1,\dots,a_k$. That is, the Darboux polynomials for a given cofactor form a vector space.
\end{enumerate}

\section{Kahan's method and polarization}
Kahan's method \cite{ka2,ka1} for the quadratic ODE 
\begin{eqnarray}
\dot x = f(x) := Q(x,x)+Bx+c,\quad x\in\R^n
\label{eq:quadvf}
\end{eqnarray}
(where $Q$ is an $\R^n$-valued symmetric bilinear  form, $B\in\R^{n\times n}$, and $c\in\R^n$) is
the map  $x\mapsto x'$ with step size $h$ given by
\begin{eqnarray}
\frac{x'-x}{h} = Q(x,x') + \frac{1}{2}B(x+x')+ c.
\label{eq:kahanmap}
\end{eqnarray}
Because the right-hand side of Eq. (\ref{eq:kahanmap}) is linear in $x'$, it can be solved explicitly to get the rational map
\begin{eqnarray*}
x'= x+h\Big(I - \frac{h}{2}f'(x)\Big)^{-1}f(x).
\end{eqnarray*}
Its inverse is also rational,
\begin{eqnarray*}
x= x'-h\Big(I + \frac{h}{2}f'(x')\Big)^{-1}f(x'),
\end{eqnarray*}
so the Kahan map is birational. 

Moreover, a homogeneous quadratic vector field $f(x)$ can be expressed in terms of a
bilinear form $Q(x, x)$, as in Eq. (\ref{eq:quadvf}), using the technique of polarization:
\begin{eqnarray*} Q(x_1,x_2) = \frac{1}{2}(f(x_1+x_2)-f(x_1)-f(x_2)).\end{eqnarray*}
So the Kahan method can be obtained by polarizing the quadratic terms of the ODE, evaluating
them at $(x,x')$ and by replacing the linear and constant terms by the midpoint approximation.

Polarization is a map from a homogeneous polynomial to a symmetric multilinear form in
more variables. For example, the polarization of the cubic $f(x)$ is the trilinear form
\begin{eqnarray*}F(x_1, x_2, x_3) = \frac{1}{6}\frac{\partial}{\partial\lambda_1}\frac{\partial}{\partial\lambda_2}\frac{\partial}{\partial\lambda_3}\left.f(\lambda_1 x_1 + \lambda_2 x_2 + \lambda_3 x_3)\right|_{\lambda=0},\end{eqnarray*}
where $x$, $x_1$, $x_2$, $x_3\in\mathbb{R}^n$. This is equal to $\frac{1}{6}$ 
 of the coefficient of $\lambda_1\lambda_2\lambda_3$  in
$f(\lambda_1 x_1 + \lambda_2 x_2 + \lambda_3 x_3).$ It satisfies
$F(x, x, x) = f(x)$.

For example, consider $x=(y,z,w)\in\mathbb{R}^3$.
The polarization of $3y^2z$ is
$y_1y_2z_3 + y_2y_3z_1 + y_3y_1z_2$ and the polarization of 
$6yzw$ is $y_1z_2w_3 + y_2z_3w_1 + y_3z_1w_2 + y_1z_3w_2 +
y_3z_2w_1 + y_2z_1w_3$. 

Polarization of a homogeneous vector field of degree $k + 1$ determines  a multilinear form in $k +
1$ variables. We call these variables $x_0,\dots,x_k$, where $x_k\in\mathbb{R}^n$.

Celledoni et al.  \cite{ce5} considered the discretization of the first-order homogeneous ODE
\begin{eqnarray*}\dot x = F(x, x, \dots , x), \quad x\in\mathbb{R}^n\end{eqnarray*}
of degree $k + 1$ by the birational `polar map' 
$(x_0, \dots , x_{k-1}) \to (x_1, \dots , x_k)$, where
\begin{eqnarray*} \frac{x_k-x_0}{h} = F(x_0, \dots , x_k),\end{eqnarray*}
regarded as a $k$-step method. When the ODE is canonical Hamiltonian (or Poisson with
a constant Poisson structure), 
this polar map has an invariant measure and its $k$th iterate has $k$ independent integrals.
For $k=1$ it coincides with Kahan's method, but when $k>1$ its phase space $\mathbb{R}^{nk}$ is different
from that of the ODE.

\section{Polarization of homogeneous second order ODEs: invariant measure}

In this paper, we consider the canonical Hamiltonian systems with phase space $\mathbb{R}^{2n}$, Hamiltonian (\ref{eq:ham}), and Hamilton's equations
\begin{eqnarray*}
\dot x &= K p,\\
\dot p &= -\nabla W(x).
\end{eqnarray*}
Eliminating $p$ gives the second-order ODE (\ref{eq:ode}). When $W(x)$ is a homogeneous polynomial of degree 4,
let $\frac{1}{4}V(x_0,x_1,x_2,x_3)$ be the polarization of $W$ so that $W(x)=\frac{1}{4}V(x,x,x,x)$ and $V$ is linear in each argument. Note that we then have
\begin{eqnarray}
\label{eq:hess}
\eqalign{
\nabla W(x) = V(x,x,x,\cdot),\\
\mathop{\rm Hess} W(x) = 3 V(x,x,\cdot,\cdot).
}
\end{eqnarray}
Here Hess$W(x)$ is the Hessian of $W$, i.e. the matrix of second partial derivatives of $W$.

We consider the second-order map $(x_0,x_1)\mapsto (x_1,x_2)$ defined by
\begin{eqnarray}
\label{eq:map}
 \frac{x_2-2x_1+x_0}{h^2} = - K V(x_0,x_1,x_2,\cdot).
\end{eqnarray}

It is known from the work of Hone and Quispel \cite{ho2} that the map (\ref{eq:map}) has an integral and a preserved measure for $n=1$. We now outline the strategy that we have used to discover  analogous invariants in any dimension $n$. 

We first consider specific examples of (\ref{eq:map}) for $n=2$. In these examples, we find that the Jacobian determinant factors as $N/D^3$ where $N$ and $D$ are polynomials. The cofactor ansatz described above then leads to the determination of two linearly independent Darboux polynomials, $P_4$ of degree 4 and $P_6$ of degree 6, with cofactors both equal to the Jacobian determinant. As they have the same cofactor, any linear combination of $P_4$ and $P_6$ is again a Darboux polynomial. Thus, we look for a convenient basis. As 
the degree 4 polynomial has nonzero constant term, the constant term in $P_6$ can be eliminated. 
In this basis, the terms in $P_6$ of degree 2, 4, and 6 can be expressed as invariant functions of  $K$ and $V$. Using this approach, it was possible to discover general expressions for two independent Darboux polynomials for any $n$, and hence our two key results: the invariant measure of the map (Proposition \ref{prop:measure}) and a first integral (Theorem \ref{thm:integral}).
 
\begin{proposition}
\label{prop:measure}
The map (\ref{eq:map}) on $\mathbb{R}^{2n}$ has invariant measure $(\det M_{01})^{-1}\prod dx$, where $\prod dx=dx_{0,1}\wedge\dots \wedge dx_{0,n}\wedge dx_{1,1}\wedge\dots\wedge dx_{1,n}$ is the
standard Euclidean measure on $\mathbb{R}^{2n}$ and
\begin{eqnarray*}M_{01} := I + h^2 K V_{01},\quad V_{01} := V(x_0,x_1,\cdot,\cdot).\end{eqnarray*}
\end{proposition}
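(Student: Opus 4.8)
The plan is to verify directly that $P := \det M_{01} = \det\bigl(I + h^2 K\,V(x_0,x_1,\cdot,\cdot)\bigr)$ is a discrete Darboux polynomial for the map $\phi\colon(x_0,x_1)\mapsto(x_1,x_2)$ with cofactor equal to $\det\D\phi$, and then invoke item (i) of the list in Section 2. Concretely, writing $P' := P\circ\phi = \det M_{12}$ with $M_{12} := I + h^2 K\,V(x_1,x_2,\cdot,\cdot)$, I want to show $P'/P = \det\D\phi$, equivalently $\det M_{12} = (\det\D\phi)\,\det M_{01}$. Since $\phi$ is birational, $\D\phi$ is most naturally computed by implicit differentiation of the defining relation (\ref{eq:map}).

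First I would compute $\det\D\phi$. Treating (\ref{eq:map}) as $G(x_0,x_1,x_2) := x_2 - 2x_1 + x_0 + h^2 K\,V(x_0,x_1,x_2,\cdot) = 0$, the implicit function theorem gives, for the map $(x_0,x_1)\mapsto(x_1,x_2)$,
\[
\D\phi = \begin{pmatrix} 0 & I \\ -(\partial_{x_2}G)^{-1}\partial_{x_0}G & -(\partial_{x_2}G)^{-1}\partial_{x_1}G \end{pmatrix},
\]
so that $\det\D\phi = \pm\det\bigl((\partial_{x_2}G)^{-1}\partial_{x_0}G\bigr)$. Now $\partial_{x_2}G = I + h^2 K\,V(x_0,x_1,\cdot,\cdot) = M_{01}$ by linearity of $V$ in the last slot, and $\partial_{x_0}G = I + h^2 K\,V(x_1,x_2,\cdot,\cdot) = M_{12}$ similarly (here the symmetry of $V$ is used to move the free slot). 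Hence $\det\D\phi = \pm\det(M_{01}^{-1}M_{12}) = \pm\,\det M_{12}/\det M_{01}$, which is exactly the Darboux relation $P' = (\det\D\phi)\,P$ up to the sign; the sign is $+1$ because at $h=0$ the map is the shift $(x_0,x_1)\mapsto(x_1,2x_1-x_0)$, whose Jacobian determinant is $\det\D\phi = (-1)^n\det(-I) = 1$, and determinants depend continuously on $h$, so the sign is constant. Then item (i) of Section 2 (applied with $n$ replaced by $2n$) immediately yields that $(\det M_{01})^{-1}\prod dx$ is invariant.

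The step I expect to be the crux is the bookkeeping that identifies $\partial_{x_0}G$ and $\partial_{x_2}G$ cleanly as $M_{12}$ and $M_{01}$. The term $h^2 K\,V(x_0,x_1,x_2,\cdot)$ is an $\R^n$-valued function whose $i$-th component is $h^2\sum_j K_{ij} V(x_0,x_1,x_2,e_j)$; differentiating with respect to the $\ell$-th component of $x_2$ and using multilinearity gives $h^2\sum_j K_{ij} V(x_0,x_1,e_\ell,e_j)$, and the full symmetry of the polarization $V$ lets me rewrite this as the $(i,\ell)$ entry of $h^2 K\,V(x_0,x_1,\cdot,\cdot)$, i.e.\ of $M_{01}-I$. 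The analogous computation in the $x_0$ slot produces $h^2 K\,V(x_1,x_2,\cdot,\cdot) = M_{12}-I$. Once this is set up, everything else is a one-line determinant identity. I would also remark that this argument does not use that the vector field is Hamiltonian — only that $K$ is symmetric and $W$ homogeneous of degree $4$ (so $V$ is a symmetric $4$-linear form) — which matches the paper's later claim that the measure result extends to non-conservative vector fields.
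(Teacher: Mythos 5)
Your proposal is correct and follows essentially the same route as the paper: both differentiate the defining relation implicitly to identify $\partial_{x_2}G=M_{01}$ and $\partial_{x_0}G=M_{12}$ (using the symmetry and multilinearity of $V$), and conclude $\det\D\phi=\det M_{12}/\det M_{01}$, which is exactly the Darboux condition giving the invariant measure. The only cosmetic difference is that the paper tracks the sign explicitly via $\det(-\D_{x_0}x_2)$ rather than fixing it by continuity at $h=0$.
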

\begin{proof}
Because $V(x_0,x_1,x_2,\cdot) = V_{01}x_2$, the  equation (\ref{eq:map}) for $x_2$ can be written in the form
\begin{eqnarray}
\label{eq:Mform}
 M_{01} x_2 = 2 x_1 - x_0.
 \end{eqnarray}
The map in first order form
\begin{eqnarray*} \begin{pmatrix}x_1\\  x_0 \end{pmatrix}
\mapsto
\begin{pmatrix}x_2\\  x_1 \end{pmatrix}
\end{eqnarray*}
has Jacobian derivative
\begin{eqnarray*}\begin{pmatrix} \D_{x_1} x_2 & \D_{x_0} x_2 \\  I & 0  \end{pmatrix}\end{eqnarray*}
with determinant
\begin{eqnarray}
\label{eq:det}
\det(-\D_{x_0} x_2).
\end{eqnarray}
Taking Jacobian derivatives of both sides of (\ref{eq:Mform}) with respect to $x_0$ and using the product rule on the left gives
\begin{eqnarray*} M_{01} \D_{x_0} x_2+ h^2 K V_{12} = -I\end{eqnarray*}
or
\begin{eqnarray*} I+h^2 K V_{12} = M_{12} = -M_{01}D_{x_0} x_2 .\end{eqnarray*}
Taking determinants gives
\begin{eqnarray*}\det M_{12} = \det (-\D_{x_0} x_2)\det M_{01}\end{eqnarray*}
which, together with (\ref{eq:det}), establishes the result.
\end{proof}

\section{Polarization of homogeneous second order ODEs: first integral}

The following proposition establishes that the map (\ref{eq:map}) has a first integral
for all nonsingular matrices $K$ and for all homogeneous quartic potentials $V$, in any dimension. 
In the  expression (\ref{eq:int}) for the integral, under the approximation $x_1-x_0\approx h \dot x$,
the first term approximates $h^2\dot x^\top K^{-1}\dot x$, i.e. $2h^2$ times the kinetic energy, 
and the second term approximates $2h^2 W(x)$, i.e. $2h^2$ times the potential energy.
That is, the integral is a perturbation of the Hamiltonian of (\ref{eq:ham}).

\begin{theorem}
\label{thm:integral}
Let $K$ be a symmetric nonsingular $n\times n$ matrix. The map (\ref{eq:map}) has the rational first integral
\begin{eqnarray}
\label{eq:int}
F(x_0,x_1) := \Delta x_0^\top  K^{-1}M_{01}^{-1} \Delta x_0 - \frac{1}{2}x_0^\top K^{-1}(M_{01}^{-1}-I)x_1
\end{eqnarray}
where $\Delta$ is the forward difference operator, i.e. $\Delta x_0 = x_1-x_0$.
\end{theorem}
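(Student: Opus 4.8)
The plan is to reduce $F$ to a manifestly symmetric function of the three consecutive points $x_0,x_1,x_2$ and then exploit the time‑reversibility of the map. As in the proof of Proposition~\ref{prop:measure} I write (\ref{eq:map}) as $M_{01}x_2=2x_1-x_0$, and I set $S_{01}:=M_{01}K$. Since $K$ and $V_{01}$ are symmetric, $S_{01}=K+h^2KV_{01}K$ is symmetric, $K^{-1}M_{01}^{-1}=S_{01}^{-1}$, and $K^{-1}(M_{01}^{-1}-I)=S_{01}^{-1}-K^{-1}$, with the same holding for the pair $(x_1,x_2)$. Two facts follow at once. First, $F$ is symmetric in its arguments, $F(a,b)=F(b,a)$: in the functional form (\ref{eq:int}) the first term is even under $a\leftrightarrow b$ (as $M_{ba}=M_{ab}$ by symmetry of $V$), and the second is symmetric because $K^{-1}(M_{ab}^{-1}-I)$ is a symmetric matrix. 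Second, $M_{01}x_2=2x_1-x_0$ is equivalent to $S_{01}^{-1}(2x_1-x_0)=K^{-1}x_2$.

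The core of the argument is a lemma: if $c$ is any vector with $M_{ab}c=2b-a$, where $M_{ab}:=I+h^2KV(a,b,\cdot,\cdot)$, then
\[ F(a,b)=\tfrac12 a^\top K^{-1}b+\tfrac12 b^\top K^{-1}c-a^\top K^{-1}c \;=:\; \Phi(a,b,c). \]
To prove it I rewrite $F(a,b)=(b-a)^\top S_{ab}^{-1}(b-a)-\tfrac12 a^\top S_{ab}^{-1}b+\tfrac12 a^\top K^{-1}b$, insert the elementary decompositions $b-a=(2b-a)-b$ and $a=2b-(2b-a)$, and use $S_{ab}^{-1}(2b-a)=K^{-1}c$ together with the symmetry of $S_{ab}^{-1}$. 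The quantity $b^\top S_{ab}^{-1}b$ — the only one not expressible through inner products of $a,b,c$ against $K^{-1}$ — appears in both pieces and cancels, leaving exactly $\Phi(a,b,c)$. Observe that $\Phi(a,b,c)=\Phi(c,b,a)$ because $K^{-1}$ is symmetric.

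With the lemma in hand the invariance is nearly immediate. Applying it to $(a,b,c)=(x_0,x_1,x_2)$, using $M_{01}x_2=2x_1-x_0$, gives $F(x_0,x_1)=\Phi(x_0,x_1,x_2)$. The same relation runs backwards: because $V$ is totally symmetric, $M_{12}x_0=x_0+h^2KV(x_1,x_2,x_0,\cdot)=x_0+h^2KV(x_0,x_1,x_2,\cdot)=2x_1-x_2$ by (\ref{eq:map}), while $M_{21}=M_{12}$; hence the lemma applies to $(a,b,c)=(x_2,x_1,x_0)$ and gives $F(x_2,x_1)=\Phi(x_2,x_1,x_0)=\Phi(x_0,x_1,x_2)$. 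Combining this with the symmetry $F(x_1,x_2)=F(x_2,x_1)$ yields $F(x_1,x_2)=F(x_0,x_1)$, which is the asserted invariance.

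I expect the only genuinely new idea to be the backward relation $M_{12}x_0=2x_1-x_2$, which is what makes one computation serve for both steps of the map; after that the proof is bilinear bookkeeping, and the clean form of $\Phi$ rests on the cancellation of $b^\top S_{ab}^{-1}b$ in the lemma and on the symmetry of $K^{-1}M_{ab}^{-1}$, i.e.\ on $K$ being symmetric. All the manipulations are identities of rational functions, so $M_{01}$ and $M_{12}$ need only be generically invertible, which is exactly what a statement about birational maps requires.
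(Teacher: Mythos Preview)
Your proof is correct and takes a genuinely different route from the paper's. The paper computes $\Delta F(x_0,x_1)$ directly via the product rule for forward differences, expands both terms of $F$, and then groups all contributions according to their coefficient matrix ($K^{-1}$, $K^{-1}M_{01}^{-1}$, or $K^{-1}M_{12}^{-1}$), checking that each group collapses after one application of $M_{01}^{-1}(2x_1-x_0)=x_2$ or $M_{12}^{-1}(2x_1-x_2)=x_0$. You instead collapse $F(x_0,x_1)$ to the closed three-point form $\Phi(x_0,x_1,x_2)=\tfrac12 x_0^\top K^{-1}x_1+\tfrac12 x_1^\top K^{-1}x_2-x_0^\top K^{-1}x_2$, which involves only the matrix $K^{-1}$ and is manifestly invariant under $(x_0,x_1,x_2)\leftrightarrow(x_2,x_1,x_0)$; the invariance of $F$ then follows from the time-reversibility $M_{12}x_0=2x_1-x_2$ of the map and the argument-symmetry $F(a,b)=F(b,a)$. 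Your approach is more conceptual and explains \emph{why} the integral exists --- it is the reversibility of (\ref{eq:map}) combined with the palindromic symmetry of $\Phi$ --- while the paper's approach is a direct verification that does not need to recognise the symmetry of $F$ or to isolate the cancellation of $b^\top S_{ab}^{-1}b$. The three-point identity $F(x_0,x_1)=\Phi(x_0,x_1,x_2)$ is a useful addition in its own right.
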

\begin{proof}
We need to show that $F(x_1,x_2)=F(x_0,x_1)$, that is, that $\Delta F(x_0,x_1)=0$.
We recall the product formula for forward differences,
\begin{eqnarray*}\Delta(a_0 b_0 c_0) = a_0 b_0 (\Delta c_0)+a_0 (\Delta b_0)c_1 + (\Delta a_0) b_1 c_1.\end{eqnarray*}
From Eq. (\ref{eq:map}), we have
\begin{eqnarray*} 
\Delta^2 x_0 &= - K V_{01} x_2 \\
&= (I-M_{01})x_2 \\
&= (I-M_{12})x_0,
\end{eqnarray*}
\begin{eqnarray*} 
M_{01}^{-1}(2x_1-x_0) &= x_2,
\end{eqnarray*}
and
\begin{eqnarray*} 
M_{12}^{-1}(2x_1-x_2)& = x_0.
\end{eqnarray*}
Note that both $M_{01}K$ (which is equal to $(I+h^2 KV_{01})K$) and its inverse $K^{-1}M_{01}^{-1}$ are symmetric matrices.

The forward difference of the first term in the integral $F$ is
\begin{eqnarray*}
\fl \Delta(\Delta x_0^\top M_{01}^{-1} \Delta x_0)
& = 
\Delta x_0^\top  K^{-1}M_{01}^{-1} \Delta^2 x_0 + \Delta x_0^\top K^{-1} \Delta M_{01}^{-1} \Delta x_1 + \Delta^2 x_0^\top K^{-1}M_{12}^{-1} \Delta x_1\\
&= (x_1-x_0)^\top K^{-1}(M_{01}^{-1}-I)x_2 + (x_1-x_0)^\top K^{-1}(M_{12}^{-1} -M_{01}^{-1})(x_2-x_1)\\
& \qquad-x_0^\top K^{-1}(I-M_{12}^{-1})(x_2-x_1). 
\end{eqnarray*}
The forward difference of the second term in the integral is
\begin{eqnarray*}
\fl -\frac{1}{2}\Delta(x_0^\top K^{-1}(M_{01}^{-1}-I)x_1)\\
= -\frac{1}{2}\left(
x_0^\top K^{-1}(M_{01}^{-1}-I)\Delta x_1 + x_0^\top  K^{-1}\Delta(M_{01}^{-1}-I) x_2 + (\Delta x_0)^\top K^{-1}(M_{01}^{-1}-I)x_2\right)\\
= -\frac{1}{2}\left(x_0^\top K^{-1}(M_{01}^{-1}-I)(x_2-x_1)+x_0^\top K^{-1}(M_{12}^{-1}-M_{01}^{-1})x_2\right.\\
 \qquad \left.+ (x_1-x_0)^\top K^{-1}(M_{12}^{-1}-I)x_2\right)
\end{eqnarray*}
Each term is a quadratic form with coefficient matrix one of the symmetric matrices $K^{-1}$, $K^{-1}M_{01}^{-1}$, or $K^{-1}M_{12}^{-1}$.

The terms with coefficient matrix $K^{-1}M_{12}^{-1}$ are applied to the pairs of vectors
\begin{eqnarray*}
\fl (x_1,x_2)-(x_1,x_1)-(x_0,x_2)+(x_0,x_1)+(x_0,x_2)-(x_0,x_1)-\frac{1}{2}\left((x_0,x_2)+(x_1,x_2)-(x_0,x_2)\right) \\
 = -\frac{1}{2}(x_1,2x_1-x_2),
\end{eqnarray*}
giving the value
\begin{eqnarray}
\label{eq:term1}
-\frac{1}{2}x_1^\top K^{-1} M_{12}^{-1}(2x_1-x_2)=-\frac{1}{2}x_1^\top K^{-1} x_0.
\end{eqnarray}

The terms with coefficient matrix $K^{-1}$ are applied to the pairs of vectors
\begin{eqnarray*}
\fl - (x_1,x_2)+(x_0,x_2)-(x_0,x_2)+(x_0,x_1)+\frac{1}{2}\left((x_0,x_2)-(x_0,x_1)+(x_1,x_2)-(x_0,x_2)\right)\\
=\frac{1}{2}(-(x_1,x_2)+(x_1,x_0)),
\end{eqnarray*}
giving the value
\begin{eqnarray}
\label{eq:term2} -\frac{1}{2}x_1^\top K^{-1}(x_2-x_0).
\end{eqnarray}

The terms with coefficient matrix $K^{-1} M_{01}^{-1}$ are applied to the pairs of vectors
\begin{eqnarray*}
\fl  (x_1,x_2)-(x_0,x_2)-(x_1,x_2)+(x_1,x_1)+(x_0,x_2)-(x_0,x_1)-\frac{1}{2}((x_0,x_2)-(x_0,x_1)-(x_0,x_2))\\
 = \frac{1}{2}(x_1,2x_1-x_0)
\end{eqnarray*}
giving the value
\begin{eqnarray}
\label{eq:term3}
\frac{1}{2}x_1^\top K^{-1} M_{01}^{-1}(2x_1-x_0)= \frac{1}{2}x_1^\top K^{-1} x_2.
\end{eqnarray}

The forward difference $\Delta F(x_0,x_1)$ of the integral is the sum of the three terms (\ref{eq:term1}), (\ref{eq:term2}), and (\ref{eq:term3}), namely zero. 

\end{proof}

\noindent\textbf{Remark.} The denominator of the invariant measure (Prop. \ref{prop:measure}) is equal to the denominator of the first integral  (Theorem \ref{thm:integral}). Therefore, both the numerator and the denominator of the first integral are Darboux polynomials with cofactor equal to the reciprocal of the Jacobian determinant of the map. Exactly the same structure holds for the Kahan map Eq. \ref{eq:kahanmap} in the Hamiltonian case \cite{ce6}.

\section{Polarization of nonhomogeneous second order ODEs}

A nonhomogeneous function $f\colon\mathbb{R}^n\to \mathbb{R}$ of degree $\le d$ is associated with the homogeneous
function $\tilde f\colon \mathbb{R}^{n+1}\to\mathbb{R}$ of degree $d$ given by $\tilde f(x,z) := z^d f(x/z)$, where $z\in\mathbb{R}$. Therefore, nonhomogeneous potentials $W$ can be handled by polarization by (i) homogenising $\nabla W$;  (ii) extending $K$ to $\widetilde K = \left[\begin{smallmatrix}K & 0 \\  0 & 0\end{smallmatrix}\right]$; 
discretizing the extended homogeneous system by polarization; and (iv) restricting to $z=1$, noting that
the level set $z=1$ is
invariant under both the extended differential equation and the polarised map.

Proposition \ref{prop:rkn} below shows, however, that the method resulting from steps (i)--(iv) above and
the original method (\ref{eq:map}) for homogeneous potentials are both equivalent to the same
 Runge--Kutta--Nystr\"om method.

\def\pol{\mathrm{pol}}

For any (possibly nonhomogeneous) function or vector field $f(x)$ of degree $\le d$, let
\begin{eqnarray*} \pol_d f(x_0,\dots,x_{d-1})\end{eqnarray*}
be the restriction to $z=1$ of the polarization of the homogenization $\tilde f(x,z)$ of $f(x)$.
The polar map of $\ddot x = f(x)$, $\deg(f)\le 3$, is then defined by
\begin{eqnarray}
\label{eq:polar2}
 x_2-2x_1+x_0 = h^2 \pol_3 f(x_0,x_1,x_2).
 \end{eqnarray}

\begin{proposition} \cite{ho2}
\label{prop:rkn}
The map (\ref{eq:polar2}) is equivalent to a specific 7-stage  Runge--Kutta--Nystr\"om method.
\end{proposition}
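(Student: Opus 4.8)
The plan is to write out the right-hand side of (\ref{eq:polar2}) explicitly by means of the polarization identity and then to repackage the resulting force evaluations as the internal stages of a one-step Runge--Kutta--Nystr\"om (RKN) method conjugate to the polar map. First I would invoke the polarization identity for a homogeneous cubic: if $\tilde F$ denotes the symmetric trilinear polarization of $\tilde f$, then, since $\tilde f$ is homogeneous of degree $3$ (so $\tilde f(0)=0$),
\begin{eqnarray*}
\fl \tilde F(u_0,u_1,u_2) = \tfrac16\big(\tilde f(u_0+u_1+u_2) - \tilde f(u_0+u_1) - \tilde f(u_0+u_2) - \tilde f(u_1+u_2)\\
\fl \qquad {}+ \tilde f(u_0) + \tilde f(u_1) + \tilde f(u_2)\big).
\end{eqnarray*}
Setting $u_i=(x_i,1)$ and using $\tilde f(x,z)=z^3 f(x/z)$ converts each partial sum into a scaled evaluation of $f$ at an affine average, giving
\begin{eqnarray*}
\pol_3 f(x_0,x_1,x_2) &=& \tfrac16\Big(27\, f\big(\tfrac{x_0+x_1+x_2}{3}\big) - 8\, f\big(\tfrac{x_0+x_1}{2}\big) - 8\, f\big(\tfrac{x_0+x_2}{2}\big)\\
&& {}- 8\, f\big(\tfrac{x_1+x_2}{2}\big) + f(x_0) + f(x_1) + f(x_2)\Big)
\end{eqnarray*}
(and indeed $\pol_3 f(x,x,x)=f(x)$). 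Thus the polar map (\ref{eq:polar2}) samples $f$ at exactly seven, generically distinct, arguments --- the three nodes $x_0,x_1,x_2$, the three edge midpoints, and the centroid --- which will become the seven internal stages.

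Next I would realise these seven points as RKN stages and exhibit the conjugacy. The polar map is $(x_0,x_1)\mapsto(x_1,x_2)$; I would conjugate it to a one-step map $(q,v)\mapsto(q',v')$ via the substitution $q=x_1$, $v=v_1:=(x_1-x_0)/h + h\,g(x_0,x_1)$, where $g$ is a fixed linear combination of $f(x_0)$, $f(x_1)$ and $f\big(\tfrac12(x_0+x_1)\big)$ whose coefficients sum to $\tfrac12$ (so that $v_1$ is a first-order approximation to $\dot x$). With this choice $x_0 = x_1 - h v_1 + h^2 g$, and, using $x_2-2x_1+x_0 = h^2\pol_3 f(x_0,x_1,x_2)$, one checks that each of the seven arguments can be written in the RKN stage form $X_i = x_1 + c_i h v_1 + h^2\sum_j a_{ij} f(X_j)$; this reads off the nodes $c_i$ and a $7\times7$ matrix $(a_{ij})$, whose stage equations for $x_2$, for the two midpoints touching $x_2$, and for the centroid are implicit, so the method is implicit. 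The position update $x_2 = x_1 + h v_1 + h^2\sum_i b_i f(X_i)$ then reproduces (\ref{eq:polar2}) after the definition of $v_1$ is substituted, fixing the weights $b_i$; and the weights $\bar b_i$ of the velocity update $v' = v_1 + h\sum_i \bar b_i f(X_i)$ are forced by the requirement that $v'$ stand to $(x_1,x_2)$ exactly as $v_1$ stands to $(x_0,x_1)$, i.e. $v' = (x_2-x_1)/h + h\,g(x_1,x_2)$ --- and it is this requirement that makes the one-step RKN map genuinely conjugate to the polar map rather than merely producing the same sequence of positions. One finally verifies the consistency relations $\sum_i b_i = \tfrac12$ and $\sum_i \bar b_i = 1$ (which are automatic, since $\pol_3 f(x,x,x)=f(x)$); the explicit $7$-stage tableau is recorded by Hone and Quispel \cite{ho2}.

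The hard part will be the middle step: showing that there is a single admissible $g$ --- equivalently, a single triple $(c_i)$, $(a_{ij})$, $(\bar b_i)$ --- for which all seven affine combinations simultaneously close up as RKN stages and for which the reconstructed $v'$ obeys the same law at the next step. Once $\pol_3 f$ is in the explicit form displayed above, the remaining work is linear bookkeeping in the $h^2$-expansion of the stage equations.
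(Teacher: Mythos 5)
Your proposal follows essentially the same route as the paper: both arguments rest on the inclusion--exclusion polarization identity that writes $\mathrm{pol}_3 f$ as a fixed linear combination of $f$ evaluated at the seven convex combinations of $x_0,x_1,x_2$ (the three points, the three pairwise midpoints, and the centroid), with the nonhomogeneous case absorbed by homogenizing and restricting to $z=1$. The only difference is that you spell out the introduction of a velocity variable and the conjugacy to a one-step map in $(q,v)$, whereas the paper simply declares the seven points to be the stage values in $x$ and defers the explicit tableau to Hone and Quispel; your extra bookkeeping is correct (and, contrary to your worry, routine, since every stage is an affine combination of $x_0,x_1,x_2$ and hence closes up for any fixed consistent choice of $g$) but goes beyond the paper's level of detail.
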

 
\begin{proof}
We first establish an identity for arbitrary $d$ that provides a Runge--Kutta-like expression for the polarization of nonhomogeneous functions.

For homogeneous $f$ of degree $d$, a standard identity in algebraic polarization \cite[p. 110]{greenberg} recovers its
polarization as a linear combination of its values:
\begin{eqnarray*}
\pol_d f(x_0,\dots,x_{d-1}) = \frac{1}{d!} \sum_{1\le m \le d \atop 0\le i_1<\dots < i_m<d} (-1)^{k-m} f(x_{i_1}+\dots+x_{i_m}),
\end{eqnarray*}
where the sum is over all nonempty subsets of $\{0,\dots,d-1\}$.
Using homogeneity of $f$, we get
\begin{eqnarray}
\label{eq:polrk}
\pol_d f(x_0,\dots,x_{d-1})  = \frac{1}{d!} \sum_{1\le m \le d \atop 0\le i_1<\dots < i_m<d} (-1)^{k-m} m^{d} f\left(
\frac{x_{i_1}+\dots+x_{i_m}}{m}\right).
\end{eqnarray}
This is a linear combination of the values of $f$ at $2^{d}-1$ points, each of which is a convex combination of the $x_j$. 

For $d=3$ and the method (\ref{eq:polar2}),
these points may be taken to be the 7 stage values in $x$ of a  Runge--Kutta--Nystr\"om method.

If, now, $f$ is nonhomogeneous of degree $\le d$, then $\tilde f([x,1])=f(x)$ and (on the right hand side of (\ref{eq:polrk})  in the last argument of $\tilde f$) $(1+\dots+1)/m=1$, giving
\begin{eqnarray*}
\pol_d \tilde f([x_0,1],\dots,[x_{d-1},1])=\pol_d f(x_0,\dots,x_{d-1}),
\end{eqnarray*}
while the final zero row of $\widetilde K$ gives the discretization of the $z$ component of the extended system as
\begin{eqnarray*} z_2-2z_1+z_0=0\end{eqnarray*}
which is satisfied by $z_k=1$ for all $k$.
\end{proof}

\begin{table}
\small
\begin{eqnarray*}
\pol_2 f&= 2 f\left(\frac{x_0+x_1}{2}\right)-\frac{1}{2}f(x_0)-\frac{1}{2}f(x_1)\\
\pol_2 x^2 &= 2 \left(\frac{x_0+x_1}{2}\right)^2 - \frac{1}{2}x_0^2 - \frac{1}{2}x_1^2 = x_0 x_1\\
\pol_2 x &= 2\frac{x_0+x_1}{2} - \frac{1}{2}x_0 - \frac{1}{2}x_1 = \frac{1}{2}(x_0+x_1)\\
\pol_2 1 &= 2 - \frac{1}{2}-\frac{1}{2} = 1\\
\pol_3 f &= \frac{27}{6}f\left(\frac{x_0+x_1+x_2}{3}\right) 
- \frac{8}{6}f\left(\frac{x_0+x_1}{2}\right) 
- \frac{8}{6}f\left(\frac{x_0+x_2}{2}\right) 
- \frac{8}{6}f\left(\frac{x_1+x_2}{2}\right) \\ 
& \qquad+ \frac{1}{6}f(x_0)
+ \frac{1}{6}f(x_1)
+ \frac{1}{6}f(x_2)\\
\pol_3 x^3 &= x_0 x_1 x_2\\
\pol_3 x^2 &= \frac{1}{3}(x_0 x_1 + x_1 x_2 + x_2 x_0)\\
\pol_3 x &= \frac{1}{3}(x_0+x_1+x_2)\\
\pol_3 1 &= 1\\
\end{eqnarray*}
\caption{\label{tab:pol} Examples of degree-$d$ polarization of polynomials of degree $\le d$, for $d=2$ and $d=3$, illustrating how the Runge--Kutta-like formula Eq. \ref{eq:polrk}), applied to nonhomogenous polynomials, gives the same result as homogenization.}
\end{table}

The invariant measure for the nonhomogeneous case can be determined by applying Prop. \ref{prop:measure} to the homogenized system. It can also be expressed directly in terms of the nonhomogeneous vector field, as follows.

\begin{proposition}
\label{prop:fp}
Let $f=-K\nabla W$ be a nonhomogeneous  vector field of degree $\le 3$. The first-order map associated with
\ref{eq:polar2}) has invariant measure 
\begin{eqnarray}
\label{eq:measure2}
\frac{\prod dx}{\det\left(I-\frac{h^2}{3}\pol_2\D f\right)}.
\end{eqnarray}
\end{proposition}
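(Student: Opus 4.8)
The plan is to obtain the measure (\ref{eq:measure2}) by applying Proposition~\ref{prop:measure} to the homogenized system and then restricting to the invariant slice $z=1$, as announced just before the statement. Write $\widetilde W(x,z)=z^4W(x/z)$ for the homogeneous quartic potential on $\R^{n+1}$, set $\widetilde K=\left[\begin{smallmatrix}K&0\\0&0\end{smallmatrix}\right]$, and let $\tfrac14\widetilde V$ be the polarization of $\widetilde W$. By construction (steps (i)--(iv) above, as used in the proof of Proposition~\ref{prop:rkn}), the polar map (\ref{eq:polar2}) for $f=-K\nabla W$ is the restriction of the homogeneous map (\ref{eq:map}) for $(\widetilde W,\widetilde K)$ to the invariant set $\{z_0=z_1=1\}$, on which it reads $(x_0,x_1)\mapsto(x_1,x_2)$ with $x_2$ given by (\ref{eq:polar2}). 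Proposition~\ref{prop:measure} then supplies the invariant density $(\det\widetilde M_{01})^{-1}$ of the homogeneous map relative to the standard measure on $\R^{2(n+1)}$, where $\widetilde M_{01}=I+h^2\widetilde K\,\widetilde V((x_0,z_0),(x_1,z_1),\cdot,\cdot)$.

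The first step is to show that the restriction to $\{z_0=z_1=1\}$ of this invariant measure is an invariant measure of (\ref{eq:polar2}) with density the restriction of $(\det\widetilde M_{01})^{-1}$. Grouping the first-order variables as $(x,z)$ with $x=(x_0,x_1)$ and $z=(z_0,z_1)$, the $z$-block of the homogeneous map is the linear map $(z_0,z_1)\mapsto(z_1,\,2z_1-z_0)$, which is independent of $x$, has determinant $1$, and fixes $(1,1)$. Hence the Jacobian of the homogeneous first-order map is block triangular, its determinant equals that of its $x$-block, and on the slice this coincides with the Jacobian determinant of (\ref{eq:polar2}). Substituting this into the invariance relation $\widetilde\rho\circ\widetilde\phi\cdot\det\D\widetilde\phi=\widetilde\rho$ and evaluating at $z_0=z_1=1$ yields the claim.

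It then remains to compute $\det\widetilde M_{01}\big|_{z_0=z_1=1}$. Because the last row of $\widetilde K$ is zero, $\widetilde M_{01}$ is block triangular with bottom row $(0,\dots,0,1)$ and top-left $n\times n$ block $I_n+h^2KB$, where $B$ is the top-left $n\times n$ block of $\widetilde V((x_0,1),(x_1,1),\cdot,\cdot)$; thus $\det\widetilde M_{01}=\det(I_n+h^2KB)$. Applying (\ref{eq:hess}) to $\widetilde W$, the form $\widetilde V(\widetilde x_0,\widetilde x_1,\cdot,\cdot)$ equals $\tfrac13$ of the (matrix-valued) polarization of $\mathop{\rm Hess}\widetilde W$; and differentiating $\widetilde W(x,z)=z^4W(x/z)$ twice in $x$ shows the top-left $n\times n$ block of $\mathop{\rm Hess}\widetilde W$ to be exactly the degree-$2$ homogenization of $\mathop{\rm Hess} W$. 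Since passing to the top-left block commutes with polarization and with the substitution $z=1$, this gives $B=\tfrac13\,\pol_2\mathop{\rm Hess} W(x_0,x_1)$ directly from the definition of $\pol_2$ (equivalently via the identity (\ref{eq:polrk}) as in the proof of Proposition~\ref{prop:rkn}). Finally $\D f=-K\mathop{\rm Hess} W$ and $\pol_2$ is linear in its argument, so $K\,\pol_2\mathop{\rm Hess} W=-\pol_2\D f$, whence $\det\widetilde M_{01}\big|_{z=1}=\det\!\big(I-\tfrac{h^2}{3}\pol_2\D f\big)$, which is the denominator in (\ref{eq:measure2}).

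I expect the only genuinely delicate point to be the first step: confirming that the $z=1$ restriction of the homogeneous invariant measure is actually invariant for the restricted map, rather than merely a measure on the slice. Once the block structure of the extended first-order map is exploited this is short, and the rest is the bookkeeping needed to match polarization, homogenization, and the block structure forced by the zero row of $\widetilde K$.
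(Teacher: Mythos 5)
Your proposal is correct and follows essentially the same route as the paper: apply Proposition~\ref{prop:measure} to the homogenized system with $\widetilde K=\left[\begin{smallmatrix}K&0\\0&0\end{smallmatrix}\right]$, use the zero last row/column of $\widetilde K$ and the identification of the top-left block of the polarized Hessian with $\tfrac13\pol_2\mathop{\rm Hess}W=-\tfrac13 K^{-1}\pol_2\D f$ (when restricted to $z=1$), and restrict the resulting measure to the invariant slice $z_0=z_1=1$. The only difference is that you spell out the block-triangularity argument showing the restricted density is genuinely invariant for the restricted map, a step the paper asserts without detail; your verification of it is correct.
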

\begin{proof}
First, if $f$ is homogeneous then 
from (\ref{eq:hess}) we have
\begin{eqnarray*}
-K V(x_0,x_1,x_2,\cdot) &= -K \pol_3 \nabla W(x_0,x_1,x_2) \\
&= \pol_3 f(x_0,x_1,x_2)
\end{eqnarray*}
and
\begin{eqnarray*}
-K V(x_0,x_1,\cdot,\cdot) 
&= -K\frac{1}{3}\pol_2\mathrm{Hess} W(x_0,x_1)\\
& = \frac{1}{3} \pol_2\D f(x_0,x_1),\\
\end{eqnarray*}
establishing (\ref{eq:measure2}) for the homogeneous case.

If $f$ is nonhomogeneous, we apply (\ref{eq:measure2}) to the homogenized system and restrict the invariant measure
to the invariant set $z=1$. The last row and column of $\widetilde K$ are zero, and the top left $n\times n$ block
of $\D\tilde f$, evaluated at $z=1$, is equal to $\D f$. This gives the result.
\end{proof}

The integral of the polar map for nonhomogeneous $f$ cannot be obtained directly, as the extended matrix $\widetilde K$ is noninvertible. Instead, we introduce a perturbation parameter $\varepsilon$ and let
$\widetilde K = \left[\begin{smallmatrix}K & 0 \\  0 & \varepsilon\end{smallmatrix}\right]$. Then
the polar map of the perturbed homogenized system has the form
\begin{eqnarray*}
x_2-2x_1+x_0 &= -h^2 K \nabla_x\widetilde V(x,z)\\
z_2-2z_1+z_0 &= -h^2\varepsilon \nabla_z\widetilde V(x,z).
\end{eqnarray*}
Its first integral  from (\ref{eq:int}) has the form
\begin{eqnarray*}
F([x_0,z_0],[x_1,z_1])=\frac{1}{\varepsilon} (z_1-z_0)^2 + F_0([x_0,z_0],[x_1,z_1]) + \mathcal{O}(\varepsilon).
\end{eqnarray*}
Restricting to $z_0=z_1=1$, i.e., passing to the polar map (\ref{eq:polar2}) of the nonhomogeneous system, and then taking the limit $\varepsilon\to 0$, determines $F_0([x_0,1],[x_1,1])$ as a first integral of (\ref{eq:polar2}). 

\section{Higher order systems}
The following proposition generalises Prop. \ref{prop:fp} to $m$th order systems featuring degree $m+1$ polynomials.
The $m=2$ case generalizes Prop. \ref{prop:fp} to arbitrary nonconservative vector fields $f$.  The odd $m$ case contains a restriction on $f$ which is satisfied, for example, by Hamiltonian vector fields when $m=1$. \begin{proposition}
\label{prop:ho}
The polar map
\begin{eqnarray}
\label{eq:ho}
x_m +\left(\sum_{i=1}^{m-1}c_i x_i\right)+ (-1)^m x_0 = h^m \pol_{m+1} f(x_0,\dots,x_m)
\end{eqnarray}
associated with the $m$th-order polynomial differential equation
\begin{eqnarray*}
x^{(m)} = f(x),\quad x\in\mathbb{R}^n,\ \mathrm{deg}(f)\le m+1
\end{eqnarray*}
preserves the measure
\begin{eqnarray*}\frac{\prod dx}{\det(I-\frac{h^m}{m+1} \pol_m\D f)}\end{eqnarray*}
\begin{itemize}
\item[(i)] for all $f$ when $m$ is even; and
\item [(ii)] for all $f$ that satisfy $\det(I+\D f)=\det(I-\D f)$ when $m$ is odd.
\end{itemize}
\end{proposition}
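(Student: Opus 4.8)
The plan is to run the proof of Proposition~\ref{prop:measure} at general degree. The one new ingredient is the polarization derivative identity: for $f$ of degree $\le m+1$ and any slot $j$,
\begin{equation*}
\frac{\partial}{\partial x_j}\pol_{m+1}f(x_0,\dots,x_m)=\frac{1}{m+1}\,\pol_{m}\D f\bigl(x_0,\dots,\widehat{x_j},\dots,x_m\bigr),
\end{equation*}
which generalises (\ref{eq:hess}) and follows by differentiating $\pol_{m+1}f(x,\dots,x)=f(x)$ along a single slot and using symmetry of the polarization (for nonhomogeneous $f$, read it off the homogenised identity and restrict to $z=1$). The $j=m$ case shows that (\ref{eq:ho}) can be solved for $x_m$ in the form $M_{(0)}\,x_m=(\text{terms in }x_0,\dots,x_{m-1})$, where
\begin{equation*}
M_{(0)}:=I-\tfrac{h^{m}}{m+1}\,\pol_{m}\D f(x_0,\dots,x_{m-1})
\end{equation*}
is exactly the matrix appearing in the claimed measure; in particular the polar map is birational, with $x_m$ rational in $x_0,\dots,x_{m-1}$. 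As in Proposition~\ref{prop:measure}, in first-order form $(x_0,\dots,x_{m-1})\mapsto(x_1,\dots,x_m)$ the Jacobian is block-companion, with determinant $\pm\det(\D_{x_0}x_m)$.

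I would then differentiate (\ref{eq:ho}) implicitly with respect to $x_0$, exactly as in Proposition~\ref{prop:measure}: the product rule on the left together with the derivative identity (in slots $0$ and $m$) on the right give
\begin{equation*}
M_{(0)}\,\D_{x_0}x_m=\bigl(1-(-1)^{m}\bigr)I-M_{(1)},\qquad M_{(1)}:=I-\tfrac{h^m}{m+1}\,\pol_{m}\D f(x_1,\dots,x_m),
\end{equation*}
the $(1-(-1)^m)I$ arising solely from the $x_0$-coefficient $(-1)^m$ of (\ref{eq:ho}); the intermediate coefficients $c_i$ do not enter (and are, for consistency with $x^{(m)}=f$, the forward-difference weights $(-1)^{m-i}\binom{m}{i}$). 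When $m$ is even this reads $M_{(0)}\D_{x_0}x_m=-M_{(1)}$; taking determinants and pairing with the Jacobian formula gives $\det(\D\phi)=\det M_{(1)}/\det M_{(0)}$, i.e. $(\det M)^{-1}\prod dx$ is invariant. That is part (i), and it needs no hypothesis on $f$.

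When $m$ is odd (part (ii)) the right-hand side is instead $2I-M_{(1)}=I+\tfrac{h^m}{m+1}\pol_m\D f(x_1,\dots,x_m)=:N_{(1)}$, and the same bookkeeping shows the candidate measure is preserved if and only if $\det N_{(1)}=\det M_{(1)}$ identically, i.e. $\det\bigl(I+\tfrac{h^m}{m+1}\pol_m\D f\bigr)=\det\bigl(I-\tfrac{h^m}{m+1}\pol_m\D f\bigr)$. This is the step I expect to be the main obstacle. The hypothesis $\det(I+\D f)=\det(I-\D f)$ constrains the single matrix $\D f(x)$, whereas what is needed is the same determinant symmetry for $\pol_m\D f$, which by (\ref{eq:polrk}) is a fixed weighted average of values of $\D f$ at barycentres of the $x_i$; but vanishing of the odd elementary symmetric functions of a matrix (equivalently, a spectrum symmetric about $0$) is \emph{not} preserved under arbitrary linear combinations, so the transfer cannot be purely formal. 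I would make it work by reading the hypothesis as asserting that $\D f(x)$ lies, for every $x$, in a fixed linear subspace all of whose elements $A$ satisfy $\det(I+A)=\det(I-A)$---the infinitesimally symplectic (Hamiltonian) matrices being the prototype, and the source of the $m=1$ example---since such a subspace, being linear, is stable under the polarization operator, so that $\pol_m\D f(x_1,\dots,x_m)$ lies in it too and hence $\det N_{(1)}=\det M_{(1)}$. Two loose ends remain: nonhomogeneous $f$ is reduced to the homogeneous case by homogenising and restricting to the invariant set $z=1$, exactly as in the proof of Proposition~\ref{prop:fp}; and the signs in the block-companion Jacobian, suppressed above, must be tracked---using that $m$, respectively $m-1$, is even they cancel against the $\pm1$'s coming from the determinants, consistently with the polar map being orientation-preserving at $h=0$.
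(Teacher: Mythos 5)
Your proposal follows the same route as the paper's proof: write the homogeneous map as $M_{(0)}x_m=(\hbox{terms in }x_0,\dots,x_{m-1})$ using the slot--derivative identity $\partial_{x_j}\pol_{m+1}f=\frac{1}{m+1}\pol_m\D f$ on the remaining slots, observe that the block-companion Jacobian of the first-order form has determinant $\det\left((-1)^{m+1}\D_{x_0}x_m\right)$, differentiate (\ref{eq:ho}) implicitly in $x_0$, and take determinants; the nonhomogeneous case is reduced to the homogeneous one by homogenization as in Proposition~\ref{prop:fp}. Your relation $M_{(0)}\D_{x_0}x_m=\left(1-(-1)^m\right)I-M_{(1)}$ is exactly the paper's displayed identity, the $c_i$ indeed play no role, and the even-$m$ case closes as you describe.

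The only substantive point of divergence is the odd-$m$ step, and there your instinct is correct: the paper's proof simply states ``and $\det(I+\D f)=\det(I-\D f)$ when $m$ is odd, this establishes the result,'' i.e.\ it asserts without argument precisely the identity you isolate, namely $\det\left(I+\tfrac{h^m}{m+1}\pol_m\D f(x_1,\dots,x_m)\right)=\det\left(I-\tfrac{h^m}{m+1}\pol_m\D f(x_1,\dots,x_m)\right)$. As you note, the stated hypothesis only controls the diagonal values $\D f(x)=\pol_m\D f(x,\dots,x)$, and since $\det(I+A)-\det(I-A)$ is not a linear function of $A$ once $n\ge 3$, its vanishing on the diagonal of a multilinear form does not formally propagate to general arguments. (For $n=2$ the condition reduces to $\mathrm{tr}\,\D f=0$, which is linear and does transfer; for $m=1$ the polarization is the identity, which is why the Kahan case is clean.) Your repair---reading the hypothesis as membership of $\D f(x)$ in a fixed linear space of matrices on which $\det(I+A)=\det(I-A)$ holds identically, the infinitesimally symplectic matrices being the model case---is stable under $\pol_m$ by linearity and covers both examples the paper offers after the proposition ($f=J^\top\nabla W$ and $v(x)\frac{\partial}{\partial y}+w(y)\frac{\partial}{\partial x}$). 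So your argument is, at the one delicate point, more careful than the paper's own; everywhere else the two proofs coincide.
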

\begin{proof}
First consider the case that $f$ is homogeneous, in which case the map (\ref{eq:ho}) can be written in the form
\begin{eqnarray*}
x_m + \left(\sum_{i=1}^{m-1}c_i x_i\right)+ (-1)^m x_0 = \frac{1}{m+1}h^m \pol_m \D f(x_0,\dots,x_{m-1})x_m
\end{eqnarray*}
or
\begin{eqnarray}
\label{eq:Mform2}
(I-h^m \frac{1}{m+1} \pol_m\D f(x_0,\dots,x_{m-1}))x_m = -\left(\sum_{i=1}^{m-1}c_i x_i\right)- (-1)^m x_0.
\end{eqnarray}

The map in first order form
\begin{eqnarray*} \begin{pmatrix}x_{m-1} \\  \vdots \\ x_0 \end{pmatrix}
\mapsto
\begin{pmatrix}x_m\\  \vdots\\x_1 \end{pmatrix}
\end{eqnarray*}
has Jacobian derivative
\begin{eqnarray*}\begin{pmatrix} \D_{x_{m-1}} x_m & \D_{x_{m-2}} x_m & \dots & \D_{x_0} x_m\\ 
 I & 0 & \dots & 0 \\ 
 0 & I & \dots & 0 \\ 
\dots & & & \\ 
 \end{pmatrix}
\end{eqnarray*}
with determinant
\begin{eqnarray}
\label{eq:detm}
\det((-1)^{m+1}\D_{x_0} x_m).
\end{eqnarray}

Taking Jacobian derivatives of both sides of (\ref{eq:Mform2}) with respect to $x_0$ and using the product rule on the left gives
\begin{eqnarray*}
 (I-h^m\frac{1}{m+1} \pol_m\D f(x_0,\dots,x_{m-1}))&  \D_{x_0} x_m- h^m\frac{1}{m+1} \pol_m\D f(x_1,\dots,x_m))
 \\ & = (-1)^{m+1}I
 \end{eqnarray*}
or
\begin{eqnarray*}
 (I-h^m\frac{1}{m+1} \pol_m & \D f(x_0,\dots,x_{m-1}))  \left((-1)^{m+1}\D_{x_0} x_m\right)\\
&= I+(-1)^{m+1} h^m\frac{1}{m+1} \pol_m\D f(x_1,\dots,x_m)
\end{eqnarray*}
Taking determinants gives
\begin{eqnarray*}
 \det\big(I-h^m\frac{1}{m+1} \pol_m & \D f(x_0,\dots,x_{m-1})\big)  \det\left((-1)^{m+1})\D_{x_0} x_m\right)\\
& = \det\big(I+(-1)^{m+1} h^m\frac{1}{m+1} \pol_m\D f(x_1,\dots,x_m)\big)
\end{eqnarray*}
Together with Eq. (\ref{eq:detm}), and $\det(I+\D f)=\det(I-\D f)$ when $m$ is odd, this establishes the result.
The nonhomogeneous case follows as in Prop. \ref{prop:fp}.
\end{proof}
Note that if $f=J^\top \nabla W$ for some $W\colon\R^n\to \R$ and some antisymmetric matrix $J$, then Sylvester's criterion gives
 $\det(I+\D f) = \det(I+J \mathrm{Hess} W) = \det(I+(\mathrm{Hess} W)J)
= \det(I+J^\top (\mathrm{Hess} W)^\top) = \det(I-\D f)$. However, other vector fields (such as
$v(x)\frac{\partial}{\partial y} + w(y)\frac{\partial}{\partial x}$) also satisfy this condition.

\section{A system with a linear symmetry}

As we have seen, the discretization considered in this paper is equivalent to a Runge--Kutta--Nystr\"om method. Therefore, it is equivariant with respect to (partitioned) linear maps \cite{mo}.  That is, given any linear maps $A\colon X\to Y$, and any two $A$-related differential equations $\ddot x = f(x)$ ($x\in X$) and $\ddot y = g(y)$ ($y\in Y$) obeying
$g\circ A = A\circ f$, the polar maps associated with the differential equations are themselves $A$-related. If $A$ is invertible and $g=f$, then $A$ is a symmetry of $f$ and of its polar map. (Similar remarks hold for affine maps). 

This suggests that polar maps of differential equations with linear symmetries may have further special properties.
In this section we consider the special case of the  system with $n=2$ and rotationally-invariant Hamiltonian
\begin{eqnarray}
\label{eq:rot}
H(x,p) = \frac{1}{2}\|p\|^2 + \alpha\|x\|^2+\beta\|x\|^4.
\end{eqnarray}
The symmetry $x\mapsto Rx$, $R\in O(2)$, of Hamilton's equations extends to a symmetry $(x_0,x_1)\mapsto (Rx_0,Rx_1)$ of the polar map. The invariant measure of this map is rotationally invariant. Computing this measure using Proposition \ref{prop:measure} gives its explicit expression
$$ \left( (1 + \textstyle\frac{2}{3}h^2(\alpha + 4\beta(x_0\cdot x_1)))^2 - \frac{16}{9}h^4\beta^2 \|x_0\|^2\|x_1\|^2\right)^{-1}dx_{0,1}\wedge dx_{0,2}
\wedge dx_{1,1}\wedge dx_{1,2}$$
in terms of the three fundamental invariants $x_0\cdot x_1$, $\|x_0\|^2$, and $\|x_1\|^2$ of the group action. (Here $x_0=(x_{0,1},x_{0,2})\in\mathbb{R}^2.$)

The Jacobian determinant of the polar map factors as $N_2 N_6/D_4^3$, where the subscripts denote the degree of the polynomials. This provides more candidates for cofactors.

Indeed, we find immediately that the cofactor $N_2/D_4$ has two rotationally-invariant Darboux polynomials $P_1$ and $P_2$:
\begin{eqnarray*}
P_1 &= x_{1,1} x_{0,2} - x_{0,1} x_{1,2}\\  
P_2 &= 3 + h^2(2\alpha + 4\beta(x_0\cdot x_1)).
\end{eqnarray*}
These provide the integral $P_1/P_2$, that approximates $\frac{1}{3}h$ times the integral of Hamilton's equations that is associated with the rotational symmetry, namely $x_2\dot x_1 - x_1 \dot x_2$. 

Trying other cofactors and Darboux polynomials up to degree 6 yields no further information other than those known for general systems and those generated by them.

The symmetry, integrals, and invariant measure are enough to ensure that the map is superintegrable in the sense of van der Kamp et al. \cite{vanderkamp}, i.e, it is a measure-preserving $n$-dimensional map with $n-1$ functionally independent constants of motion.

\begin{proposition}The polar map associated with (\ref{eq:rot}) is superintegrable.
\end{proposition}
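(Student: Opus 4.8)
The plan is to read off, from the results already established, the data in the definition of superintegrability of van der Kamp et al.\ \cite{vanderkamp}---an invariant measure together with enough functionally independent first integrals---and to supply the remaining constant of motion from the rotational symmetry, via their criterion that an $n$-dimensional measure-preserving map carrying $n-2$ functionally independent first integrals and a free one-parameter symmetry group that preserves the measure and the integrals is superintegrable (equivalently, reducing by the symmetry yields a superintegrable $(n-1)$-dimensional map). For the $4$-dimensional polar map $\phi$ of (\ref{eq:rot}) it is therefore enough to produce two integrals and a one-parameter symmetry group. The invariant measure is the rotationally invariant one displayed above, from Proposition \ref{prop:measure}. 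The first integral $I:=P_1/P_2$ is the one exhibited in this section. A second, energy-type, integral $F_0$ is produced by the homogenisation--perturbation recipe of Section 6 applied to $W(x)=\alpha\|x\|^2+\beta\|x\|^4$: extend $K=I$ to $\widetilde K=\left[\begin{smallmatrix}I&0\\0&\varepsilon\end{smallmatrix}\right]$, take the integral (\ref{eq:int}) of the associated homogeneous quartic polar map, restrict to the invariant slice $z_0=z_1=1$, and let $\varepsilon\to0$; this $F_0$ is a first integral of (\ref{eq:polar2}) for this $W$. The symmetry group is $SO(2)$ acting diagonally by $(x_0,x_1)\mapsto(Rx_0,Rx_1)$: it commutes with $\phi$ because $\phi$ is a Runge--Kutta--Nystr\"om map and hence equivariant under this linear symmetry of (\ref{eq:rot}); it is free off the fixed point $x_0=x_1=0$ of $\phi$; and it preserves $F_0$ and $I$, since each is built from the rotation invariants $\|x_0\|^2$, $\|x_1\|^2$, $x_0\cdot x_1$ and the $SO(2)$-invariant $P_1=x_{1,1}x_{0,2}-x_{0,1}x_{1,2}$.

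The only step that is not a direct quotation of earlier results is the functional independence of $F_0$ and $I$. The cleanest route is to work at leading order in $h$: under $x\approx x_0$, $\dot x\approx(x_1-x_0)/h$ one has $F_0\sim 2h^2 H$ and $I\sim\tfrac13 h$ times the angular momentum associated with the rotational symmetry of (\ref{eq:rot}), and the Hamiltonian and angular momentum of a planar central force are functionally independent off the locus of circular-type states; hence $dF_0\wedge dI$, a rational $2$-form in $(x_0,x_1)$ with coefficients rational in $h$, is nonzero at leading order in $h$, hence not identically zero, hence nonzero off a proper algebraic subset for every $h$. Together with the compatibility of the $SO(2)$ action noted above, the criterion of \cite{vanderkamp} then applies and gives the result. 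Concretely, the missing third integral is the momentum of the rotation: on each common level surface $\Sigma_{e,\ell}=\{F_0=e,\ I=\ell\}$ the generator $\xi$ of the rotation is divergence-free for the induced area form $\nu$ (the invariant $4$-form divided by $dF_0\wedge dI$), because $\xi$ preserves the measure, $F_0$ and $I$; so $\xi$ is locally Hamiltonian, $\iota_\xi\nu=dg$, and since $\phi$ preserves $\nu$ and commutes with $\xi$, $g\circ\phi-g$ is locally constant, vanishing because $\phi$ fixes the origin, where $\xi$ and hence $dg$ vanish. Then $(F_0,I,g)$ are three functionally independent constants of motion whose common level sets are the generically one-dimensional orbit closures, on each of which $\phi$ acts as a translation in the measure coordinate.

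The genuine obstacle is just the genericity bookkeeping attached to the independence claim: one must exclude the fixed point at the origin, the proper subvariety on which $dF_0\wedge dI$ (or, if one builds $g$ explicitly instead of citing the theorem, $dF_0\wedge dI\wedge dg$) degenerates, and the exceptional values of $(e,\ell)$---for instance $\ell=0$, where $\Sigma_{e,\ell}$ meets the locus $P_1=0$ and need not be a smooth surface. The other ingredients---equivariance of $\phi$, and rotational invariance of the measure, of $F_0$, and of $I$---are immediate from the constructions already given, so once the independence of $F_0$ and $I$ is confirmed the proposition follows from the cited criterion.
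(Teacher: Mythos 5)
Your proposal is correct and follows essentially the same route as the paper: quotient the $4$-dimensional map by the measure- and integral-preserving $SO(2)$ action (the paper cites Huang for this reduction) to obtain a $3$-dimensional measure-preserving map with the two integrals $P_1/P_2$ and the energy-type integral $F_0$, which is superintegrable in the sense of van der Kamp et al. Your additional material --- the leading-order-in-$h$ check of functional independence (which the paper simply asserts) and the explicit local construction of the third constant $g$ --- goes beyond the paper's argument and is welcome, though the normalisation of $g$ via the fixed point at the origin does not quite work on generic level sets $\Sigma_{e,\ell}$, which miss the origin; this embellishment is not needed for the reduction argument itself.
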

\begin{proof}
The map is 4-dimensional with a 1-dimensional measure-preserving symmetry group, and thus descends to a measure-preserving map on the 3-dimensional quotient \cite{huang}. The two integrals are also invariant under the symmetry and hence descend to the quotient.  This yields a 3-dimensional measure-preserving map with 2 functionally independent integrals, thus superintegrable.
\end{proof}

A different integrable discretization of this system is given in McLachlan \cite{mc}.

\paragraph{Acknowledgements}
GRWQ is grateful to SMRI for support during a visit to the University of Sydney, and to Nalini Joshi and colleagues for fruitful and enjoyable discussions.

\section*{References}

 \end{document}